\newtheorem{thm}{Theorem}[section]
\newtheorem{example}[thm]{Example}
\newtheorem{rem}[thm]{Remark}
\begin{document}

\title{Sensitivity to switching rates in stochastically switched ODEs
\thanks{
}}
\author{Sean D. Lawley\thanks {Department of Mathematics, Duke University, Box 90320, Durham, NC 27708-0320 USA, (lawley@math.duke.edu).}%
\and Jonathan C. Mattingly \thanks {Department of Mathematics, Duke University, Box 90320, Durham, NC 27708-0320 USA, (jonm@math.duke.edu).}%
\and Michael C. Reed \thanks {Department of Mathematics, Duke University, Box 90320, Durham, NC 27708-0320 USA, (reed@math.duke.edu).}}




\pagestyle{myheadings} \markboth{STOCHASTICALLY SWITCHED ODES}{S. D. LAWLEY, J. C. MATTINGLY, AND M. C. REED}\maketitle

\begin{abstract}
  We consider a stochastic process driven by a linear ordinary
  differential equation whose right-hand side switches at exponential
  times between a collection of different matrices. We construct
  planar examples that switch between two matrices where
  the individual matrices and the average of the two matrices are all
  Hurwitz (all eigenvalues have strictly negative real part), but nonetheless the
  process goes to infinity at large time for certain values of the
  switching rate. We further construct examples in higher dimensions
  where again the two individual matrices and their averages are all
  Hurwitz, but the process has arbitrarily many transitions between
  going to zero and going to infinity at large time as the switching
  rate varies. In order to construct these examples, we first prove in
  general that if each of the individual matrices is Hurwitz, then the
  process goes to zero at large time for sufficiently slow switching
  rate and if the average matrix is Hurwitz, then the process goes to
  zero at large time for sufficiently fast switching rate. We also give simple conditions that ensure the process
  goes to zero at large time for all switching rates.
\end{abstract}

\begin{keywords} Ergodicity, piecewise deterministic Markov process, switched dynamical systems, hybrid switching system, planar switched systems, linear differential equations.
\smallskip

{\bf AMS subject classifications.} 60J75, 93E15, 37H15, 34F05, 34D23.
\end{keywords}

\section{Introduction}\label{intro}

We consider the stochastic process $(X_{t})_{t\ge0}\in\mathbb{R}^{d}$ where $X_{t}$ solves $\dot{X}_{t}=A_{I_{t}}X_{t}$ with $I_{t}$ a Markov process on a finite set $E$ and $\{A_{i}\}_{i\in E}$ a set of $d\times d$ real matrices. The stability of this system when the switching process $I_{t}$ is deterministic has been extensively studied in the past decade; see \cite{balde_note_2009} and \cite{lin09}.

In \cite{benaim12planar}, the authors study the stochastic problem in the plane with $I_{t}$ a Markov process and $E=\{0,1\}$. The authors assume both $A_{0}$ and $A_{1}$ are Hurwitz (all eigenvalues have strictly negative real part) and prove the surprising result that $||X_{t}||$ may converge to 0 or $+\infty$ as $t\to\infty$ depending on the switching rate as long as an average matrix $\bar{A} = \lambda A_{0}+(1-\lambda)A_{1}$ has a positive eigenvalue for some $\lambda\in(0,1)$.

In this paper, we show that the assumption that the average matrix has a positive eigenvalue is not necessary to ensure a blowup. Specifically, we construct examples in the plane where $A_{0}$, $A_{1}$, and $\bar{A} = \lambda A_{0}+(1-\lambda)A_{1}$ are all Hurwitz, but $||X_{t}||\to+\infty$ almost surely as $t\to\infty$ for certain values of the switching rate. This is significant for the general study of switching processes because it shows that the dynamics of the switching process can be very different from both the individual dynamics (in this case, the $A_{i}$'s) and the averaged dynamics (in this case, $\bar{A}$). These planar examples are also interesting because they have multiple transitions between $||X_{t}||$ going to $0$ and going to $+\infty$ at large time as the switching rate varies. Furthermore, we construct examples in higher dimensions that have arbitrarily many such phase transitions.

Recently researchers have devoted considerable attention to randomly
switched systems and we now comment on our work in this broader
context. \cite{hairer13}, \cite{benaim12quant}, \cite{benaim12qual},
and \cite{bakhtin12} all study invariant measures for such
processes. Our work shows that the existence of such invariant
measures may depend in a complicated way on the switching rates. In
\cite{hasler13finite}, \cite{hasler13asymptotic}, and \cite{belykh13},
the authors provide conditions under which their randomly switched
systems behave according to the individual systems for slow switching
and according to the averaged system for fast switching. We prove that
our system also obeys this principle in Theorems \ref{slow theorem}
and \ref{fast theorem}. However, we show in Example \ref{one
  transition} that the transition between the slow and fast switching
regimes can be quite complicated. Furthermore, Example \ref{many
  transitions} shows that it can be as complicated as we want.

As background for these surprising results, we first prove sufficient
conditions to ensure stability for all switching rates in Section
\ref{basic theorems}. Furthermore we also show in Section \ref{basic
  theorems} that the individual matrices determine the stability for
slow switching and that the average matrix determines the stability
for fast switching. In Section \ref{medium section} we use these
theorems to construct examples that show ``medium'' switching can
induce blowups even when the individual matrices and the average
matrix are all Hurwitz.

We conclude this introduction by defining notation. Let
$E=\{0,1,\dots,n-1\}$ and let $\{A_{i}\}_{i\in E}$ be a set of
$d\times d$ real matrices. For a given switching rate $r>0$, let
$(I_{t})_{t\ge0}$ be an irreducible continuous time Markov process
with state space $E$ and generator $rQ$. Under these assumptions, the
Markov process on $E$ with generator $rQ$ has a unique invariant
probability measure which we denote by $\pi$. Furthermore, $\pi$ is
the unique
probability vector satisfying $\pi Q = 0$.

Define $(X_{t})_{t\ge0}$ to be the solution of
\begin{align}\label{integral defn}
X_{t} = X_{0}+\int_{0}^{t}A_{I_{s}}X_{s}\,ds,\quad(t\ge0).
\end{align}
Then $(X_{t},I_{t})_{t\ge0}$ is a Markov process on $\mathbb{R}^{d}\times E$. Unless otherwise noted, assume throughout that the distribution of the initial condition $(X_{0},I_{0})$ is some given probability measure on $\mathbb{R}^{d}\times E$ satisfying $\mathbb{E}||X_{0}||<\infty$. Define the average matrix
\begin{align*}
\bar{A} = \sum_{i\in E}A_{i}\pi_{i}.
\end{align*}

The following description of our process will be useful. Let $\xi_{1},\xi_{2},\dots$ denote the succession of states visited by $I_{t}$, $\tau_{1},\tau_{2},\dots$ the holding times in each state, $N(t)$ the number of switches before $t$, and $a_{t} = t-\sum_{k=1}^{N(t)}\tau_{k}$ the time since the last switch. Observe that we can write $X_{t}$ as
\begin{align}\label{process defn}
X_{t} = \exp(A_{\xi_{N(t)+1}}a_{t})\exp(A_{\xi_{N(t)}}\tau_{N(t)})\dots \exp(A_{\xi_{1}}\tau_{1})X_{0}.
\end{align}

\section{Basic stability theorems}\label{basic theorems}

\begin{theorem}[normal case]\label{normal}
If $A_{i}$ is normal and Hurwitz for each $i\in E$, then $||X_{t}||\to0$ monotonically as $t\to\infty$ almost surely.
\end{theorem}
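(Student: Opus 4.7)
The plan is to exploit the fact that for a normal matrix $A$ the Hermitian symmetrization $A+A^*$ is negative definite precisely when $A$ is Hurwitz, which makes $\|X_t\|^2$ a Lyapunov function for every individual flow $\dot{X}=A_i X$ simultaneously.

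First I would recall the standard fact that a normal matrix is unitarily diagonalizable, so $A_i$ and $A_i^*$ share an eigenbasis and $A_i+A_i^*$ is symmetric with eigenvalues $2\,\mathrm{Re}(\lambda)$ as $\lambda$ ranges over the spectrum of $A_i$. Since $A_i$ is Hurwitz, each such eigenvalue is strictly negative, so the quadratic form $x\mapsto \langle x,(A_i+A_i^*)x\rangle$ satisfies
\begin{equation*}
\langle x,(A_i+A_i^*)x\rangle \;\le\; 2\mu_i\,\|x\|^2,\qquad \mu_i \;:=\; \max\bigl\{\mathrm{Re}(\lambda): \lambda \in \mathrm{spec}(A_i)\bigr\} < 0.
\end{equation*}
Because $E$ is finite, $\mu := \max_{i\in E} \mu_i$ is strictly negative.

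Next I would differentiate $\|X_t\|^2$ on any open interval between consecutive switches. On such an interval $I_t\equiv i$ is constant and $\dot{X}_t = A_i X_t$, so
\begin{equation*}
\frac{d}{dt}\|X_t\|^2 \;=\; \langle X_t,(A_i+A_i^*)X_t\rangle \;\le\; 2\mu\,\|X_t\|^2.
\end{equation*}
Gr\"onwall's inequality then yields $\|X_t\|^2 \le e^{2\mu(t-s)}\|X_s\|^2$ whenever $[s,t]$ lies in a single holding interval. Because $X_t$ is continuous across switching times (the integral equation \eqref{integral defn} defines a continuous trajectory), the bound $\|X_t\|^2 \le e^{2\mu t}\|X_0\|^2$ propagates across all switches by concatenation. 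This simultaneously gives the monotone decrease on every deterministic segment and the exponential decay to $0$, uniformly in the sample path of $I_t$, so convergence holds almost surely (indeed for every realization).

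There is no real obstacle here: the only subtle point is ensuring monotonicity at switching instants, which follows because $\|X_t\|$ is continuous in $t$ and strictly decreasing on each open interval between switches, hence nonincreasing globally. The key structural input is that normality forces all the individual flows to contract the same quadratic form $\|\cdot\|^2$, so the argument collapses to a single Gr\"onwall estimate that is insensitive to the switching process $I_t$ entirely.
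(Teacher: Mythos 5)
Your proof is correct and rests on the same structural fact as the paper's: normality plus the Hurwitz condition forces every individual flow $e^{A_i t}$ to contract the Euclidean norm at a uniform exponential rate, so the estimate is insensitive to the switching path. The paper states this as the operator-norm bound $\|\exp(A_i t)\|\le e^{-\gamma t}$ and multiplies the factors in the product representation (\ref{process defn}), whereas you derive the same bound infinitesimally via Gr\"onwall applied to $\|X_t\|^2$; this is a differential rather than integrated formulation of the identical argument, and both yield monotonicity by restarting the estimate from any time $s$.
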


\begin{proof}
  Since each $A_{i}$ is normal and Hurwitz, there exists a $\gamma>0$
  so that for each $A_{i}$ and for every $t>0$,
\begin{align*}
||\exp(A_{i}t)||\le e^{-\gamma t}<1.
\end{align*}
Therefore
\begin{align*}
  ||X_{t}||
  & = ||\exp(A_{\xi_{N(t)+1}}a_{t})\exp(A_{\xi_{N(t)}}\tau_{N(t)})\dots\exp(A_{\xi_{1}}\tau_{1})X_{0}||\\
  & \le ||\exp(A_{\xi_{N(t)+1}}a_{t})||\Big(\prod_{k=1}^{N(t)}||\exp(A_{\xi_{k}}\tau_{k})||\Big)||X_{0}||\\
  & \le e^{-\gamma t}||X_{0}||\to0\quad\text{as }t\to\infty.
\end{align*}
To see that the convergence is monotonic, let $0\le s\le t$ and replace $X_{0}$ by $X_{s}$ in the calculation above.
\end{proof}

\begin{theorem}[commuting case]
 Assume $\{A_{i}\}_{i\in E}$ is a commuting family of matrices. If $\bar{A}$ is Hurwitz, then $||X_t||\to0$ as $t\to\infty$ almost surely.
\end{theorem}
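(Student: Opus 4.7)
The plan is to exploit commutativity to collapse the product-of-exponentials expression for $X_{t}$ into a single matrix exponential, and then to separate the average drift from the fluctuations in the occupation times.

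For each $i\in E$, let $T_{i}(t)$ denote the total time spent in state $i$ up to time $t$, so $\sum_{i\in E}T_{i}(t) = t$. Because the $A_{i}$'s commute pairwise, every pair of factors appearing in (\ref{process defn}) commutes, and the product collapses to $X_{t}=\exp\bigl(\sum_{i\in E}A_{i}T_{i}(t)\bigr)X_{0}$. I would then decompose the exponent as $\sum_{i}A_{i}T_{i}(t) = \bar{A}t + \sum_{i}A_{i}(T_{i}(t)-\pi_{i}t)$. Since $\bar{A}$ is a linear combination of the commuting $A_{i}$'s, it commutes with each of them, so the identity $\exp(M+N)=\exp(M)\exp(N)$ for commuting $M,N$ yields $X_{t}=\exp(\bar{A}t)\exp\bigl(\sum_{i}A_{i}(T_{i}(t)-\pi_{i}t)\bigr)X_{0}$.

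I would then estimate the two factors separately. Because $\bar{A}$ is Hurwitz, there exist constants $C,\beta>0$ such that $\|\exp(\bar{A}t)\|\le Ce^{-\beta t}$ for all $t\ge0$. For the second factor I would invoke the ergodic theorem for the irreducible finite-state continuous-time Markov chain $I_{t}$, which gives $T_{i}(t)/t\to\pi_{i}$ almost surely for each $i\in E$; hence $\bigl\|\sum_{i}A_{i}(T_{i}(t)-\pi_{i}t)\bigr\|\le\sum_{i}\|A_{i}\|\,|T_{i}(t)-\pi_{i}t| = o(t)$ almost surely. Combining these bounds with the elementary estimate $\|\exp(M)\|\le e^{\|M\|}$ gives $\|X_{t}\|\le Ce^{-\beta t + o(t)}\|X_{0}\|$, which tends to $0$ almost surely.

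I do not anticipate any serious obstacle. The only non-algebraic ingredient is the pointwise convergence $T_{i}(t)/t\to\pi_{i}$, but this is the standard strong law of large numbers for an irreducible continuous-time Markov chain on a finite state space with invariant distribution $\pi$, so it is essentially off-the-shelf. Everything else follows from the key structural fact that exponentials of commuting matrices multiply, a property that played no role in the normal-matrix theorem proved just above and which is precisely what allows one to reduce the problem to the asymptotics of $\exp(\bar{A}t)$.
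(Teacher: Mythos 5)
Your proposal is correct and is essentially the same argument as the paper's: the paper writes the exponent as $C_t t$ with $C_t=\sum_i A_i\frac1t\int_0^t 1_{I_s=i}\,ds$ (your $\sum_i A_iT_i(t)$), splits off $\exp(\bar A t)$ using commutativity, and controls the remainder via the same ergodic theorem $T_i(t)/t\to\pi_i$ almost surely. No substantive differences.
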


\begin{proof}
Since $\bar{A}$ is Hurwitz, there exist positive $\beta$ and $\gamma$ so that for each $t\ge0$
\begin{align*}
||\exp(\bar{A}t)||\le\beta e^{-\gamma t}.
\end{align*}
For each $t>0$, define
\begin{align*}
C_{t} = \frac{1}{t}\Big(\sum_{k=1}^{N(t)}A_{\xi_{k}}\tau_k+A_{\xi_{N(t)+1}}a_t\Big)=\sum_{i\in E}A_{i}\frac{1}{t}\int_{0}^{t}1_{I_{s}=i}\,ds.
\end{align*}
Now since $\{A_{i}\}_{i\in E}$ is a commuting family of matrices, Equation (\ref{process defn}) becomes
\begin{align*}
||X_t||
 = ||\exp\Big(\sum_{k=1}^{N(t)}A_{\xi_{k}}\tau_k+A_{\xi_{N(t)+1}}a_t\Big)X_0|| = ||\exp\left(C_tt\right)X_0||\\
 = ||\exp\left(\bar{A}t\right)\exp\left((C_{t}-\bar{A})t\right)X_0||
 \le \beta e^{-\gamma t}e^{||C_{t}-\bar{A}||t}||X_0||,
\end{align*}
Since $Q$ is irreducible, $C_t\to\bar{A}$ almost surely as
$t\to\infty$ since $\frac1t \int_0^t 1_{I_s=i} ds \rightarrow \pi_i$
almost surely as $t \rightarrow \infty$ (see \cite{norris97}, page 126). Thus, $||X_t||\to0$ almost surely as $t\to\infty$.
\end{proof}

\begin{rem}
If $\{A_{i}\}_{i\in E}$ is a commuting family of matrices and each $A_{i}$ is Hurwitz, then $\bar{A}$ is Hurwitz. This is an immediate consequence of the fact that eigenvalues ``add'' - in some order - for commuting matrices.
\end{rem}


\begin{theorem}[slow switching]\label{slow theorem}
 Assume $A_{i}$ is Hurwitz for each $i\in E$. Then there exists a constant $a>0$ so that if $r<a$, then $||X_{t}||\to0$ as $t\to\infty$ almost surely.
\end{theorem}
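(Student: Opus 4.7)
The plan is to control $\|X_t\|$ at switch times first and then extend to all $t$. Write $T_k := \sum_{j=1}^k \tau_j$ for the switch times and $Y_k := X_{T_k}$ for the sampled process. Since each $A_i$ is Hurwitz, we may choose $\beta_i \geq 1$ and $\gamma_i > 0$ with $\|\exp(A_i t)\| \leq \beta_i e^{-\gamma_i t}$ for all $t\geq 0$. Setting $\mathcal{B} := \max_i \beta_i \geq 1$ and $\gamma := \min_i \gamma_i > 0$, equation (\ref{process defn}) and submultiplicativity of the operator norm yield
\begin{equation*}
\log\|Y_N\| \leq \log\|X_0\| + N \log\mathcal{B} - \gamma\, T_N.
\end{equation*}

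Next I would prove the law of large numbers $T_N / N \to D/r$ almost surely, where $D := \sum_i \widetilde\pi_i / q_i > 0$ with $q_i := -Q_{ii}$ and $\widetilde\pi$ the invariant distribution of the embedded discrete chain of $I_t$. Note that $\widetilde\pi$ is independent of $r$ since the rescaling $Q \mapsto rQ$ does not change the embedded jump probabilities $P_{ij} = Q_{ij}/q_i$. The embedded chain is irreducible on the finite set $E$, and conditional on its path $(\xi_k)$ the holding times $\tau_k$ are independent exponentials of rate $rq_{\xi_k}$. Combining Kolmogorov's SLLN (applied pathwise to the mean-zero, uniformly variance-bounded summands $\tau_k - 1/(rq_{\xi_k})$) with the Markov-chain ergodic theorem applied to the function $i \mapsto 1/q_i$ gives the desired $T_N/N \to D/r$ almost surely. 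Dividing the previous display by $N$ then produces
\begin{equation*}
\limsup_{N\to\infty}\;\frac{1}{N}\log\|Y_N\| \;\leq\; \log\mathcal{B} - \frac{\gamma D}{r},
\end{equation*}
which is strictly negative whenever $r < a := \gamma D / (\log\mathcal{B} + 1)$. Hence $\|Y_N\| \to 0$ almost surely for every such $r$.

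Finally, to go from switch times to all $t$, for $t \in [T_k, T_{k+1})$ we have $X_t = \exp(A_{\xi_{k+1}}\, a_t)\, Y_k$, giving $\|X_t\| \leq \mathcal{B}\, \|Y_k\|$. Since $Q$ is irreducible on a finite set, $N(t) \to \infty$ almost surely, so $\|X_t\| \to 0$ almost surely when $r < a$. The main point requiring care in a full write-up is the conditional SLLN for $T_N$: the $\tau_k$ are not i.i.d., only conditionally independent given the embedded chain, so one must justify the strong law pathwise via Kolmogorov's variance criterion. Everything else reduces to elementary norm estimates and the standard ergodic theorem for finite-state Markov chains.
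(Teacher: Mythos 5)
Your proposal is correct and follows essentially the same route as the paper: bound the product of matrix exponentials by $\mathcal{B}^{N}e^{-\gamma T_N}$ using the Hurwitz property of each $A_i$, establish the strong law $T_N/N\to D/r$ for the holding times (your $D=\sum_i\widetilde\pi_i/q_i$ equals the paper's $(\sum_i\pi_i q_i)^{-1}$), and conclude the Lyapunov exponent is negative for $r$ small. The only differences are cosmetic — you normalize by the number of jumps rather than by $t$ and prove the holding-time LLN via a conditional Kolmogorov argument rather than by decomposing the sum by visited state — so the two proofs are essentially the same.
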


\begin{proof}
Since each $A_{i}$ is Hurwitz, there exist $\beta>1$ and $\gamma>0$ so that for each $A_{i}$ and each $t\ge0$
\begin{align*}
||\exp(A_{i}t)||\le\beta e^{-\gamma t}.
\end{align*}
Therefore from Equation (\ref{process defn}), we have that
\begin{align}\label{commuting bound}
\begin{aligned}
||X_{t}||
& \le ||\exp(A_{\xi_{N(t)+1}}a_{t})||\Big(\prod_{k=1}^{N(t)}||\exp(A_{\xi_{k}}\tau_{k})||\Big)||X_{0}||\\
& \le \beta^{N(t)+1}e^{-\gamma t}||X_{0}||= \exp\bigg(\Big(\frac{N(t)+1}{t}\log\beta-\gamma\Big)t\bigg)||X_{0}||.
\end{aligned}
\end{align}
Next we claim that we have the following almost sure convergence as $K\to\infty$
\begin{align}\label{Birkhoff}
\frac{1}{K}\sum_{k=1}^{K}\tau_{k}\to(r\sum_{i\in E}\pi_{i}q_{i})^{-1},
\end{align}
where $q_{i}$ is the $i$th diagonal entry of $Q$. To see this, let $s_{j}^{i}$ denote the duration of the $j$th visit of the process $I_{t}$ to state $i\in E$ and let $V_{i}(K):=\sum_{k=1}^{K}1_{\xi_{k}=i}$ denote the number of visits to $i$ before the $K$th jump of the process $I_{t}$.
Then
\begin{align*}
\frac{1}{K}\sum_{k=1}^{K}\tau_{k} = \sum_{i\in E}\frac{1}{K}\sum_{j=1}^{V_{i}(K)}s_{j}^{i} = \sum_{i\in E}\frac{V_{i}(K)}{K}\frac{1}{V_{i}(K)}\sum_{j=1}^{V_{i}(K)}s_{j}^{i}.
\end{align*}
For each $i\in E$, $\frac{V_{i}(K)}{K}\to q_{i}\pi_{i}/(\sum_{k\in E}q_{k}\pi_{k})$ almost surely as $K\to\infty$ since $Q$ is irreducible. And by the strong law of large numbers, $\frac{1}{V_{i}(K)}\sum_{j=1}^{V_{i}(K)}s_{j}^{i}\to \frac{1}{rq_{i}}$ almost surely as $K\to\infty$. Therefore, Equation (\ref{Birkhoff}) is verified.

By the definition of $N(t)$ we have that $\sum_{k=1}^{N(t)}\tau_{k}\le t\le\sum_{k=1}^{N(t)+1}\tau_{k}$. Therefore
\begin{align}\label{squeeze}
\frac{\sum_{k=1}^{N(t)}\tau_{k}}{N(t)}\le\frac{t}{N(t)}\le\frac{\sum_{k=1}^{N(t)+1}\tau_{k}}{N(t)+1}\frac{N(t)+1}{N(t)}.
\end{align}
Since each $\tau_{k}$ is almost surely finite, $N(t)\to\infty$ almost surely as $t\to\infty$. It then follows from combining Equations (\ref{Birkhoff}) and (\ref{squeeze}) that
\begin{align*}
\frac{N(t)}{t}\to r\sum_{i\in E}\pi_{i}q_{i}\quad\text{almost surely as }t\to\infty.
\end{align*}
So if $r<\gamma(2\log\beta\sum_{i\in E}\pi_{i}q_{i})^{-1}$, then $||X_{t}||\to0$ almost surely as $t\to\infty$ by Equation (\ref{commuting bound}).
\end{proof}


\begin{theorem}[fast switching]\label{fast theorem}
 Assume $\bar{A}$ is Hurwitz. Then there exists a constant $b>0$ so that if $r>b$, then $||X_{t}||\to0$ as $t\to\infty$ almost surely.
\end{theorem}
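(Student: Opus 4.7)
The strategy is to construct a Lyapunov function for the joint Markov process $(X_t, I_t)$ on $\mathbb{R}^d \times E$ by exploiting the fact that under fast switching, the slow $X$-dynamics are approximately governed by $\bar A$. Since $\bar A$ is Hurwitz, the classical Lyapunov equation $\bar A^T P + P \bar A = -I$ has a positive definite symmetric solution $P$. Setting $M_i := A_i^T P + P A_i$, we have $\sum_{i\in E} \pi_i M_i = -I$, so the symmetric matrices $-I - M_i$ sum to zero against $\pi$. Because $Q$ is irreducible, the Fredholm alternative on the finite state space $E$ yields symmetric matrices $\{R_i\}_{i\in E}$ solving the componentwise Poisson equation $\sum_{j\in E} Q_{ij} R_j = -I - M_i$, normalized by $\sum_i \pi_i R_i = 0$.

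The next step is to define the candidate Lyapunov function $F(x,i) := x^T P x + r^{-1}\, x^T R_i\, x$. Using the generator
\begin{align*}
\mathcal{L} F(x,i) = 2 x^T P A_i x + \tfrac{2}{r} x^T R_i A_i x + r \sum_{j\in E} Q_{ij} F(x,j),
\end{align*}
and invoking the Poisson equation together with $\sum_j Q_{ij} = 0$ to eliminate the $x^T M_i x$ contribution, a direct computation collapses this to
\begin{align*}
\mathcal{L} F(x,i) = -\|x\|^2 + \tfrac{1}{r}\, x^T (A_i^T R_i + R_i A_i)\, x.
\end{align*}
For $r$ sufficiently large (depending on $\max_i \|R_i\|$, $\max_i \|A_i\|$, and $\lambda_{\min}(P)$), the correction term is dominated by $\tfrac12 \|x\|^2$, and simultaneously $F(x,i)$ is pinched between two positive multiples of $\|x\|^2$ uniformly in $i \in E$; consequently $\mathcal{L} F \le -\alpha F$ on $\mathbb{R}^d \times E$ for some $\alpha > 0$.

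Dynkin's formula, applied conditionally on $(X_0, I_0)$, then yields $\mathbb{E}[F(X_t, I_t) \mid X_0, I_0] \le e^{-\alpha t} F(X_0, I_0)$, so that $\mathbb{E}[\|X_t\|^2 \mid X_0, I_0]$ decays exponentially. Since the right-hand side is almost surely finite, a Borel--Cantelli argument along the integer times $t = n$, combined with the crude pathwise bound $\sup_{t\in[n,n+1]} \|X_t\| \le e^{\max_i \|A_i\|} \|X_n\|$ (which holds regardless of how many switches occur in $[n,n+1]$), upgrades this to $\|X_t\| \to 0$ almost surely. The essential step, and the only place where the assumption of fast switching enters quantitatively, is identifying the $O(r^{-1})$ corrector $R_i$ that averages out the fluctuating drift $M_i - \bar M$; once the corrector is in hand, the rest is a routine Foster--Lyapunov estimate.
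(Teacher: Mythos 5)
Your proposal is correct, and it takes a genuinely different route from the paper. The paper's proof is a soft averaging argument: it invokes Kurtz's theorem to show that the random solution operator $S^{(r)}_{T}$ satisfies $\mathbb{E}_{\nu}||S^{(r)}_{T}||\to||\exp(\bar{A}T)||$ as $r\to\infty$, picks $T$ with $||\exp(\bar{A}T)||<\tfrac14$, and then shows $M_{n}=||X_{nT}||$ is a supermartingale with $\mathbb{E}M_{n}\le 2^{-n}\mathbb{E}M_{0}$, concluding by martingale convergence plus the same crude interpolation bound you use between sample times. Your argument instead builds a perturbed quadratic Lyapunov function $F(x,i)=x^{T}Px+r^{-1}x^{T}R_{i}x$, where the corrector $R_{i}$ solves the Poisson equation $\sum_{j}Q_{ij}R_{j}=-I-M_{i}$ (solvable by the Fredholm alternative precisely because $\sum_{i}\pi_{i}M_{i}=\bar{A}^{T}P+P\bar{A}=-I$ centers the right-hand side against $\pi$), and the generator computation you sketch does collapse to $-||x||^{2}+O(r^{-1})||x||^{2}$ as claimed. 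Your approach buys an explicit, quantitative threshold $b$ in terms of $||R_{i}||$, $||A_{i}||$, and $\lambda_{\min}(P)$, plus exponential decay of the second moment, and it is self-contained modulo standard PDMP facts; the paper's approach avoids any generator computation and works directly with first moments, at the cost of importing Kurtz's averaging theorem as a black box and yielding no explicit rate. Two small points you should make explicit: the Dynkin formula requires checking that the compensated process is a true martingale rather than a local one, which follows from the deterministic bound $||X_{t}||\le e^{\Lambda t}||X_{0}||$ and the bounded jump intensity; and the $R_{i}$ can indeed be taken symmetric since the Poisson equation is solved entrywise with symmetric right-hand sides. Neither issue is a gap.
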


The proof relies on the following lemma. Let $\mathbb{E}_{\nu}$ denote the
expectation with respect to the measure of the process
$(I_{t})_{t\ge0}$ with $I_{0}$ distributed according to $\nu$. Since we will consider processes with different switching rates, let us momentarily make the dependence on the switching rate explicit by letting $(I^{(r)}_{t})_{t\ge0}$ be the Markov process on $E$ with generator $rQ$ and define $(X^{(r)}_{t})_{t\ge0}$ with respect to $(I^{(r)}_{t})_{t\ge0}$ as before. Define $S^{(r)}_{t}$ to be the operator that
maps $X_{0}$ to $X^{(r)}_{t}$. Observe that $S^{(r)}_{t}$ is a function of
$(I^{(r)}_{s})_{0\le s\le t}$.

\begin{lemma}\label{lemma Kurtz}
For every probability measure $\nu$ on $E$ and for every $t>0$,
\begin{align*}
\mathbb{E}_{\nu}||S^{(r)}_{t}||\to ||\exp(\bar{A}t)||\quad\text{as }r\to\infty.
\end{align*}
\end{lemma}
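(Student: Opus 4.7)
The plan is to first show that the random matrix $S^{(r)}_t$ converges in probability to $\exp(\bar A t)$ as $r\to\infty$, and then to combine this with the deterministic $r$-independent bound $\|S^{(r)}_t\|\le e^{Mt}$ (where $M:=\max_{i\in E}\|A_i\|$, immediate from (\ref{process defn}) and $\|\exp(A_i\tau)\|\le e^{M\tau}$) to pass to $L^1$ convergence via bounded convergence. Since $X\mapsto\|X\|$ is continuous, $\|S^{(r)}_t\|\to\|\exp(\bar At)\|$ in probability, and the uniform bound then yields $\mathbb{E}_\nu\|S^{(r)}_t\|\to\|\exp(\bar At)\|$, as claimed.

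To obtain the convergence in probability, set $Y^{(r)}_t:=S^{(r)}_t$ and $\bar Y_t:=\exp(\bar A t)$, so that $\dot Y^{(r)}_t = A_{I^{(r)}_t} Y^{(r)}_t$ and $\dot{\bar Y}_t=\bar A \bar Y_t$ with $Y^{(r)}_0=\bar Y_0=I$. Subtracting the integral forms and rewriting the difference as $Y^{(r)}_t-\bar Y_t=\int_0^t A_{I^{(r)}_s}(Y^{(r)}_s-\bar Y_s)\,ds+\int_0^t(A_{I^{(r)}_s}-\bar A)\bar Y_s\,ds$, Gronwall gives
$$\|Y^{(r)}_t-\bar Y_t\|\le e^{Mt}\,\Big\|\int_0^t (A_{I^{(r)}_s}-\bar A)\bar Y_s\,ds\Big\|,$$
so it suffices to show the right side tends to zero in probability as $r\to\infty$.

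For this, fix $\epsilon>0$ and, using continuity of $s\mapsto\bar Y_s$, choose a partition $0=s_0<\cdots<s_n=t$ with $\|\bar Y_s-\bar Y_{s_k}\|<\epsilon$ for $s\in[s_k,s_{k+1})$. Writing $A_{I^{(r)}_s}-\bar A=\sum_{i\in E}A_i(\mathbf{1}_{I^{(r)}_s=i}-\pi_i)$, the integral decomposes as $\sum_k\bigl(\int_{s_k}^{s_{k+1}}(A_{I^{(r)}_s}-\bar A)\,ds\bigr)\bar Y_{s_k}$ plus a remainder of norm at most $2M\epsilon t$. For each fixed $k$ and $i$, the time-rescaling identity in law $(I^{(r)}_s)\stackrel{d}{=}(I^{(1)}_{rs})$ converts $\int_{s_k}^{s_{k+1}}\mathbf{1}_{I^{(r)}_s=i}\,ds$ into $\frac{1}{r}\int_{rs_k}^{rs_{k+1}}\mathbf{1}_{I^{(1)}_u=i}\,du$, which converges almost surely to $\pi_i(s_{k+1}-s_k)$ by the ergodic theorem applied to $I^{(1)}$ (as in the proof of Theorem \ref{slow theorem}). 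Hence the main term tends to zero in probability; sending $\epsilon\to0$ concludes the argument.

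The main obstacle is bookkeeping the order of limits: the partition must be fixed before taking $r\to\infty$, since the ergodic convergence of occupation times on each sub-interval $[s_k,s_{k+1})$ requires $r(s_{k+1}-s_k)\to\infty$. With the order $\epsilon$ fixed, partition fixed, $r\to\infty$, then $\epsilon\to0$, the proof reduces to a linear Gronwall estimate plus a standard averaging step, precisely in the spirit of the Kurtz averaging principle suggested by the lemma's label.
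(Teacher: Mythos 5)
Your proof is essentially correct but takes a genuinely different route from the paper. The paper does not prove the averaging statement from scratch: it constructs a time-reversed, transposed product $\tilde{S}^{\lambda}_{t}$ built from the rate-one chain $I^{(1)}$, observes that $(\tilde{S}^{\lambda}_{t})^{T}$ equals $S^{(r)}_{t}$ in distribution (the transpose is needed because the factors in (\ref{process defn}) appear in the reverse of the order required by the product formula), invokes Kurtz's 1972 theorem for the almost sure convergence $\tilde{S}^{\lambda}_{t}\to\exp(\bar{A}^{T}t)$, and then applies bounded convergence exactly as you do. You instead prove the averaging step directly: Gronwall reduces matters to the convergence of $\int_{0}^{\cdot}(A_{I^{(r)}_{s}}-\bar{A})\bar{Y}_{s}\,ds$, which you handle by a partition plus the ergodic theorem for occupation times of $I^{(1)}$ under the time-rescaling $(I^{(r)}_{s})\stackrel{d}{=}(I^{(1)}_{rs})$. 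Your argument is self-contained and elementary (it replays the Kurtz/Trotter averaging principle in the finite-dimensional linear case), at the cost of being longer; the paper's is shorter but leans on an external theorem and the slightly delicate transposition trick. Your final passage from convergence in probability plus the uniform bound $\|S^{(r)}_{t}\|\le e^{Mt}$ to $L^{1}$ convergence is valid.

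One small inaccuracy to repair: the inequality $\|Y^{(r)}_{t}-\bar{Y}_{t}\|\le e^{Mt}\bigl\|\int_{0}^{t}(A_{I^{(r)}_{s}}-\bar{A})\bar{Y}_{s}\,ds\bigr\|$ is not what Gronwall gives, since the forcing term is not monotone in $t$; the correct conclusion is $\|Y^{(r)}_{t}-\bar{Y}_{t}\|\le e^{Mt}\sup_{u\le t}\bigl\|\int_{0}^{u}(A_{I^{(r)}_{s}}-\bar{A})\bar{Y}_{s}\,ds\bigr\|$. This costs you nothing: the rescaled occupation-time functions $u\mapsto\frac{1}{r}\int_{0}^{ru}1_{I^{(1)}_{v}=i}\,dv$ are nondecreasing and $1$-Lipschitz and converge pointwise to $\pi_{i}u$ almost surely, hence converge uniformly on $[0,t]$, so your partition argument controls the supremum as well. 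With that adjustment the proof is complete.
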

\begin{proof}
Define $\{\xi^{1}_{i}\}_{i=1}^{\infty}$,
  $\{\tau^{1}_{i}\}_{i=1}^{\infty}$, $\{N^{1}(t)\}_{t\ge0}$, and
  $\{a^{1}_{t}\}_{t\ge0}$ as before but now with respect to
  $\{I^{(1)}_{t}\}_{t\ge0}$. Let the
  distribution of $I_{0}$ be a given probability measure $\nu$ on $E$
  and for $\lambda>0$ define
\begin{align*}
\tilde{S}^{\lambda}_{t}
& = \exp\Big(A^{T}_{\xi^{1}_{1}}\frac{\tau^{1}_{1}}{\lambda}\Big)\exp\Big(A^{T}_{\xi^{1}_{2}}\frac{\tau^{1}_{2}}{\lambda}\Big)\dots\exp\Big(A^{T}_{\xi_{N^{1}(\lambda t)}}\frac{\tau_{N^{1}(\lambda t)}}{\lambda}\Big)\exp\Big(A^{T}_{\xi_{N^{1}(\lambda t)+1}}\frac{a^{1}_{\lambda t}}{\lambda}\Big)
\end{align*}
where we denote the transpose of a matrix $B$ by $B^{T}$. Observe that if $r=\lambda$, then $\tilde{S}^{\lambda}_{t}$ has been defined so that $(\tilde{S}^{\lambda}_{t})^{T}$ and $S^{(r)}_{t}$ are equal in distribution. 

By \cite{kurtz72}, $\tilde{S}^{\lambda}_{t}\to \exp(\bar{A}^{T}t)$ almost surely in the strong operator topology as $\lambda\to\infty$. Since $\mathbb{R}^{d}$ is finite-dimensional, we actually have that the convergence holds in the uniform operator topology. Since $||B||=||B^{T}||$ for every matrix $B$, it follows that 
\begin{align*}
||\tilde{S}^{\lambda}_{t}||\to ||\exp(\bar{A}t)||\text{ almost surely as }\lambda\to\infty.
\end{align*}
Since $||\tilde{S}^{\lambda}_{t}||\le\exp(\max_{i}||A_{i}||t)$ for every $\lambda>0$, the bounded convergence theorem gives
\begin{align*}
\mathbb{E}||\tilde{S}^{\lambda}_{t}||\to ||\exp(\bar{A}t)||\text{ as }\lambda\to\infty.
\end{align*}
Since $||\tilde{S}^{\lambda}_{t}||$ and $||S^{r}_{t}||$ are equal in distribution, the proof is complete.
\end{proof}

\begin{proof}[of Theorem \ref{fast theorem}]
Since $\bar{A}$ is Hurwitz, there exist positive numbers $\beta$ and $\gamma$ so that for every $t\ge0$
\begin{align*}
||\exp(\bar{A}t)||\le\beta e^{-\gamma t}.
\end{align*}
Thus we can choose $T>0$ so that $||\exp(\bar{A}T)||<\frac{1}{4}$. By Lemma \ref{lemma Kurtz} there exists a $b>0$ so that if $r>b$, then $\mathbb{E}_{i}||S^{(r)}_{T}||<\frac{1}{2}$ for each $i\in E$, where $\mathbb{E}_{i}$ denotes the expectation with respect to the measure of the process $(I_{t})_{t\ge0}$ with initial measure $\mathbb{P}(I_{0}=i)=1$.

Let $r>b$ and define the process $\{M_{n}\}_{n=0}^{\infty}$ and the filtration $\{\mathcal{F}_{n}\}_{n=0}^{\infty}$ by
\begin{align*}
\begin{aligned}[c]
M_{n} & =||X_{nT}||
\end{aligned}
\quad\text{and }\quad
\begin{aligned}[c]
\mathcal{F}_{n} & =\sigma((X_{t},I_{t}):0\le t \le n T).
\end{aligned}
\end{align*}
We claim that $M_{n}$ is a supermartingale with respect to $\mathcal{F}_{n}$. It's immediate that $M_{n}\in\mathcal{F}_{n}$ and $\mathbb{E}|M_{n}|\le e^{\Lambda nT}<\infty$ for $\Lambda:=\max_{i\in E}||A_{i}||$. For $0\le s\le t$, define $S(s,t)$ to be the operator that maps $X_{s}$ to $X_{t}$. We now check the supermartingale property.
\begin{align*}
\mathbb{E}[M_{n+1} |\mathcal{F}_{n}]
& \le \mathbb{E}[||S(nT,(n+1)T)|| \,||X_{nT}|| |\mathcal{F}_{n}]\\
& = M_{n}\mathbb{E}[||S(nT,(n+1)T)|| |\mathcal{F}_{n}]\\
& = M_{n}\mathbb{E}_{I_{nT}}||S(nT,(n+1)T)||\\
& \le \frac{1}{2}M_{n}.
\end{align*}
Taking the expectation of the above inequality and iterating yields $\mathbb{E}M_{n}\le\frac{1}{2^{n}}\mathbb{E}M_{0}$. Therefore $M_{n}$ converges in $L^{1}$ to 0 since $M_{n}\ge0$. Also since $M_{n}\ge0$, the martingale convergence theorem implies that $M_{n}$ must converge almost surely. Therefore $M_{n}$ converges almost surely to 0.

To conclude that $||X_{t}||\to0$ almost surely, we need to control $||X_{t}||$ at times between multiples of $T$. This is easily obtained since $||X_{t}||$ cannot grow faster than $e^{\Lambda t}$. Let $\omega\in\Omega$ be such that $M_{n}(\omega)\to0$ and let $\epsilon>0$. There exists $N=N(\omega,\epsilon)$ so that for all $n\ge N$,
\begin{align*}
||M_{n}(\omega)||< e^{-\Lambda T}\epsilon.
\end{align*}
Thus for all $t\ge NT$,
\begin{align*}
||X_{t}(\omega)||
 \le ||(S(t-T\lfloor t/T\rfloor,t)X_{T\lfloor t/T\rfloor})(\omega)||
 \le e^{\Lambda T}M_{\lfloor t/T\rfloor}(\omega)< \epsilon.
\end{align*}
Since this set of $\omega$'s has probability one, the proof is complete.
\end{proof}

\begin{example}
Assume $E=\{0,1\}$ and $Q = \left( \begin{smallmatrix} -1&1\\ 1&-1 \end{smallmatrix} \right)$.
Define
\begin{align*}
\begin{aligned}[c]
A_{0} & = \begin{pmatrix} 1&4\\ 0&-2 \end{pmatrix}
\end{aligned}
\qquad
\begin{aligned}[c]
A_{1} & = \begin{pmatrix} -2&0\\ 0&1 \end{pmatrix}.
\end{aligned}
\end{align*}
Then $A_{0}$ and $A_{1}$ each have a positive eigenvalue, but $\bar{A} = \frac{1}{2}(A_{0}+A_{1})$ is Hurwitz. So despite the fact that each individual matrix is unstable, Theorem \ref{fast theorem} guarantees that $||X_{t}||\to0$ almost surely as $t\to\infty$ for sufficiently fast switching rate.
\end{example}

\section{Medium switching can be complicated}\label{medium section} We will now construct a switching example with two matrices, $A_{0}$ and $A_{1}$, that is surprising for the following two reasons. First, the individual matrices $A_{0}$ and $A_{1}$ and the average $\bar{A}=\frac{1}{2}(A_{0}+A_{1})$ are all Hurwitz, but $||X_{t}||$ will still blow up at large time for certain values of the switching rate. In \cite{benaim12planar}, the authors show that $||X_{t}||$ can blow up if the two individual matrices $A_{0}$ and $A_{1}$ are Hurwitz as long as the average matrix has a positive eigenvalue. Thus our result shows that this assumption on the average matrix is not necessary.

Second, the asymptotic behavior of the following example has multiple ``phase transitions'' as the switching rate varies. That is, the process goes to zero at large time for both slow and fast switching, but blows up for medium switching.

We also remark that we can choose the negative real part of all the eigenvalues of $A_{0}$, $A_{1}$, and $\bar{A}$ to have arbitrarily large absolute value.

\begin{example}\label{one transition}
Assume $\mathbb{P}(X_{0}=0)=0$ and let $E=\{0,1\}$ and $Q = \left( \begin{smallmatrix} -1&1\\ 1&-1 \end{smallmatrix} \right)$. We will show the existence of matrices $A_{0},A_{1}\in\mathbb{R}^{2\times2}$ and positive numbers $a<b$, so that
\begin{enumerate}
\item
$A_{0}$, $A_{1}$ are each Hurwitz.
\item
$\bar{A}=\frac{1}{2}(A_{0}+A_{1})$ is Hurwitz.
\item
If $r\notin(a,b)$, then $||X_{t}||\to0$ almost surely as $t\to\infty$.
\item
$||X_{t}||\to\infty$ almost surely as $t\to\infty$ for some value of $r\in(a,b)$.
\end{enumerate}
\end{example}

For positive $\alpha$ and $c$, we define
\begin{align*}
\begin{aligned}[c]
A_{0} & = \begin{pmatrix} -\alpha&c\\ 0&-\alpha \end{pmatrix}
\end{aligned}
\qquad
\begin{aligned}[c]
A_{1} & = \begin{pmatrix} -\alpha&0\\ -c&-\alpha \end{pmatrix}.
\end{aligned}
\end{align*}
Observe that $A_{0}$ and $A_{1}$ each have $-\alpha<0$ as their only
eigenvalue. The two eigenvalues of $\bar{A}=\frac{1}{2}(A_{0}+A_{1})$
are $-\alpha\pm ic/2$. Thus $A_{0}$, $A_{1}$, and $\bar{A}$ are each
Hurwitz. By Theorems \ref{slow theorem} and \ref{fast theorem},
$||X_{t}||\to0$ as $t\to\infty$ almost surely for sufficiently large
$r$ and for sufficiently small $r$. We will show that
$||X_{t}||\to+\infty$ as $t\to\infty$ almost surely for some
intermediate values of $r$.

We use polar coordinates to study the large time behavior of
$||X_{t}||$. Our technique follows \cite{benaim12planar} in this
setting and the well known utility of  the polar representation when
studying Lyapunov exponents (especially  in
two-dimensions) which dates back to at least \cite{Khasminskii_1967}. Define the radial process $R_{t}:=||X_{t}||$ and
define the angular process $U_{t}$ as the point on the unit circle
$S^{1}$ given by $X_{t}/R_{t}$. A short calculation shows that between
jumps $R_{t}$ and $U_{t}$ satisfy
\begin{align}
\dot{R}_{t} & = R_{t}\langle A_{I_{t}}U_{t},U_{t}\rangle\label{radius ode}\\
\dot{U}_{t} & = A_{I_{t}}U_{t} - \langle A_{I_{t}}U_{t},U_{t}\rangle U_{t}\label{angular ode}.
\end{align}

The advantage of this decomposition is that the evolution of the angular process doesn't depend on the radial process. Therefore $(U_{t},I_{t})$ is a Markov process on $S^{1}\times \{0,1\}$.

\begin{lemma}\label{invariant measure}
If we identify $\theta\in\mathbb{R}$ with $(\cos\theta,\sin\theta)\in S^{1}$, then the unique invariant measure of the angular process $U_{t}$ is given by
\begin{align*}
\mu(d\theta,i) & = p_{i}(\theta;r/c)1_{[0,2\pi]}(\theta)\,d\theta
\end{align*}
where for any parameter $\lambda>0$, the functions $p_{0}$ and $p_{1}$ satisfy
\begin{align}
p_{i}(\theta;\lambda) &= p_{1-i}(\theta+\pi/2;\lambda)=p_{i}(\theta+\pi;\lambda)\quad\text{for }\theta\in\mathbb{R}\label{periodic},\\
\text{and}\quad p_{0}(\theta;\lambda)&<p_{1}(\theta;\lambda)\quad\text{for }\theta\in(-\frac{\pi}{2},0)\label{greater than}.
\end{align}
\end{lemma}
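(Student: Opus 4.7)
The plan is to derive the stationary Kolmogorov--forward equations for the densities $p_{0},p_{1}$, use symmetries of the process to establish \eqref{periodic}, and then analyze a scalar ODE for $f:=p_{0}-p_{1}$ to prove the strict inequality \eqref{greater than}. Substituting $U_{t}=(\cos\theta,\sin\theta)$ into \eqref{angular ode} and computing directly shows $\dot{\theta}=-c\sin^{2}\theta$ when $I_{t}=0$ and $\dot{\theta}=-c\cos^{2}\theta$ when $I_{t}=1$, so $(U_{t},I_{t})$ is a PDMP on the compact space $S^{1}\times\{0,1\}$ whose two deterministic flows share no common fixed point ($\theta\in\{0,\pi\}$ for flow $0$, $\theta\in\{\pm\pi/2\}$ for flow $1$). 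Standard PDMP arguments give existence, uniqueness, and absolute continuity of the invariant measure $\mu$, with densities $p_{i}$ solving
\begin{align*}
c(\sin^{2}\theta\, p_{0})'=r(p_{0}-p_{1}),\qquad c(\cos^{2}\theta\, p_{1})'=-r(p_{0}-p_{1}).
\end{align*}
Adding these equations shows that $K:=\sin^{2}\theta\, p_{0}+\cos^{2}\theta\, p_{1}$ is a constant, which is strictly positive since otherwise both summands would vanish identically, contradicting that $\mu$ is a probability measure.

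For \eqref{periodic}, $\pi$-periodicity is immediate because the vector field is $\pi$-periodic in $\theta$, so $(\theta,i)\mapsto(\theta+\pi,i)$ pushes $\mu$ forward to another invariant measure and uniqueness forces $p_{i}(\theta+\pi)=p_{i}(\theta)$. For the $\pi/2$-shift-with-label-swap identity, note that $-c\sin^{2}(\theta-\pi/2)=-c\cos^{2}\theta$, so the map $(\theta,i)\mapsto(\theta-\pi/2,1-i)$ exchanges the two vector fields; since the generator $rQ$ is also invariant under $i\leftrightarrow 1-i$ (its off-diagonal entries are equal), this map pushes $\mu$ to an invariant measure, and uniqueness gives $p_{1-i}(\theta-\pi/2)=p_{i}(\theta)$, equivalent to the stated identity.

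For the strict inequality \eqref{greater than}, set $f:=p_{0}-p_{1}$ and use the conservation law to write $p_{0}=K+\cos^{2}\theta\, f$ and $p_{1}=K-\sin^{2}\theta\, f$. Substituting into the first stationary equation yields the linear scalar ODE
\begin{align*}
\tfrac{1}{4}\sin^{2}(2\theta)\, f'(\theta)=\bigl(\lambda-\tfrac{1}{2}\sin(4\theta)\bigr)f(\theta)-K\sin(2\theta),\qquad \lambda=r/c.
\end{align*}
Evaluating the stationary system at $\theta=0$ and $\theta=-\pi/2$ (where one of $\sin^{2}\theta,\cos^{2}\theta$ vanishes) gives the boundary conditions $f(0)=f(-\pi/2)=0$, and Taylor-expanding the ODE near $\theta=0$ yields $f(\theta)\sim(2K/\lambda)\theta$, so $f<0$ on a one-sided left neighborhood of $0$. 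The key observation is that at any interior zero $\theta_{0}\in(-\pi/2,0)$ of $f$, the ODE immediately gives $\tfrac{1}{4}\sin^{2}(2\theta_{0})\, f'(\theta_{0})=-K\sin(2\theta_{0})>0$, hence $f'(\theta_{0})>0$. If $f$ had any interior zero, taking the rightmost such zero $\theta_{0}$ would force $f<0$ on $(\theta_{0},0)$ (by continuity and the fact that $f$ is nonzero and eventually negative on that interval), giving $f'(\theta_{0})\le 0$ and contradicting the sign rule; hence $f$ has no interior zero and $f<0$ on $(-\pi/2,0)$.

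I expect the main technical obstacle to be the rigorous establishment of existence, uniqueness, and absolute continuity of the invariant measure together with enough regularity of the densities $p_{i}$ to justify the pointwise manipulations above; once these are in hand, the symmetry identities \eqref{periodic} are immediate from uniqueness of $\mu$, and \eqref{greater than} reduces to the clean sign-chasing argument on the scalar ODE for $f$.
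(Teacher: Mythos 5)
Your outline is correct and every computation in it checks out against the paper's formulas, but you take a genuinely different route for the heart of the lemma. The paper does not argue qualitatively: it writes the invariant density in closed form on $(-\pi/2,0)$ via $H(\theta;\lambda)=\exp(-2\lambda\cot 2\theta)\int_{\theta}^{0}\exp(2\lambda\cot 2y)\sec^{2}y\,dy$, sets $p_{0}=C\csc^{2}\theta\,\lambda H$ and $p_{1}=C\sec^{2}\theta(1-\lambda H)$, \emph{defines} the extension to $\mathbb{R}$ by the symmetry \eqref{periodic} (so that identity is built into the construction rather than derived), verifies $\mathcal{L}^{*}p=0$ directly, gets uniqueness from the monotone drift of the lifted angle ($\dot\Theta_{t}\le 0$ and $\Theta_{t}\to-\infty$, hence recurrence and irreducibility), and proves \eqref{greater than} from the integral estimate $H<\lambda^{-1}\sin^{2}\theta$ combined with the identity $p_{0}-p_{1}=CH'$. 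Your conservation law $K=\sin^{2}\theta\,p_{0}+\cos^{2}\theta\,p_{1}$ is exactly the paper's normalizing constant $C$ in disguise, your scalar ODE for $f=p_{0}-p_{1}$ is correct, and your pushforward-plus-uniqueness derivation of \eqref{periodic} and your no-interior-zero sign argument are clean alternatives that never solve the system. What the paper's route buys is that existence, absolute continuity, and all needed regularity of the density come for free from the explicit formula; what your route buys is robustness, since it would survive perturbations of the example for which no closed form is available.

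There is one real gap: the seed of your sign argument, namely $f<0$ on a left neighborhood of $0$. The point $\theta=0$ is a degenerate singular point of your ODE ($\sin^{2}2\theta$ vanishes to second order there), so ``Taylor-expanding'' is not legitimate as stated: the homogeneous solutions of that ODE grow like $\exp(-2\lambda\cot 2\theta)\sim\exp(\lambda/|\theta|)$ as $\theta\to 0^{-}$, and a solution of the stationary system need not admit an expansion $f\sim a\theta$. Likewise, ``evaluating the stationary system at $\theta=0$'' does not by itself give $f(0)=0$: $\theta=0$ is a fixed point of one of the two flows, which is precisely where invariant densities of such processes can be singular or discontinuous, so this requires a regularity input you have not supplied. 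The repair is available from ingredients you already have: positivity of $p_{0},p_{1}$ together with the conservation law give the a priori bounds $-K\sec^{2}\theta\le f\le K\csc^{2}\theta$, which grow only polynomially near $0$ and therefore force the coefficient of the exponentially growing homogeneous mode to vanish, pinning $f$ to the particular solution $\sim(2K/\lambda)\theta<0$. Equivalently, run a Gronwall argument: if $f\ge 0$ somewhere in $(-\pi/2,0)$, your zero-crossing rule forces $f>0$ up to $0^{-}$, whence $f'\ge 4\lambda f/\sin^{2}2\theta$ on $(-\pi/4,0)$ and $f$ must eventually exceed $K\csc^{2}\theta$, a contradiction. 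With that step repaired, and with existence and smoothness of the density away from the critical angles either cited properly or imported from the paper's explicit construction, your proof is complete.
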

\begin{proof}
Define the process $\Theta_{t}\in\mathbb{R}$ to be the lift of $U_t
\in S^1$
from the circle to its covering space $\mathbb{R}$. That is to say
$\Theta_t$ is the unique process 
so that $U_{t}=(\cos\Theta_{t},\sin\Theta_{t})$, $\Theta_{t}$ is continuous in $t$, and $\Theta_{0}\in[0,2\pi)$. It follows from Equation (\ref{angular ode}) and plugging in our values for $A_{0}$ and $A_{1}$ that between jumps $\Theta_{t}$ satisfies
\begin{align*}
\dot{\Theta}_{t} & =
-c[I_{t}\cos^{2}(\Theta_{t})+(1-I_{t})\sin^{2}(\Theta_{t})]\le0.
\end{align*}
Since $\min_{i\in\{0,1\}}-c[i\cos^{2}(\theta)+(1-i)\sin^{2}(\theta)]\le-c/2<0$ for all $\theta\in\mathbb{R}$, it follows that $\Theta_{t}\to-\infty$ as $t\to\infty$ almost surely. Since $\Theta_{t}$ is continuous, we conclude that the Markov process $(U_{t},I_{t})$ is recurrent and irreducible and must have a unique invariant measure.

If we identify $\theta\in\mathbb{R}$ with $(\cos\theta,\sin\theta)\in S^{1}$, then the adjoint of generator of the Markov process $(U_{t},I_{t})$ is
\begin{align*}
(\mathcal{L}^{*}q)(\theta,i) & = \partial_{\theta}\left(c\left[(1-i)\sin^{2}(\theta)+i\cos^{2}(\theta)\right]q(\theta,i)\right) + r(q(\theta,1-i)-q(\theta,i)).
\end{align*}
For $\theta\in(-\frac{\pi}{2},0)$ and $\lambda>0$, define
\begin{align*}
H(\theta;\lambda)
 & = \exp{\left(-2\lambda\cot(2\theta)\right)}\int_{\theta}^{0}\exp{\left(2\lambda\cot(2y)\right)}\sec^{2}(y)\,dy\\
p_{0}(\theta;\lambda) & = C\csc^{2}(\theta)\lambda H(\theta)\\
p_{1}(\theta;\lambda) & = C\sec^{2}(\theta)\left[1-\lambda H(\theta)\right].
\end{align*}
where 
\begin{align*}
C(\lambda) = \left[4\int_{-\frac{\pi}{2}}^{0}\sec^{2}(x)+(\csc^{2}(x)-\sec^{2}(x))\lambda H(x)\,dx\right]^{-1}.
\end{align*}
Define $H(0;\lambda)=0=p_{0}(0;\lambda)$ and $p_{1}(0;\lambda)=C(\lambda)$. Extend $p_{1}$ and $p_{0}$ to be defined on the rest of the real line by Equation (\ref{periodic}). It is easy to check that these three functions are well-defined.

Writing $p_{i}(\theta;\lambda)$ as $p(\theta,i;\lambda)$, it is easy to check that $\mathcal{L}^{*}p(\theta,i;\lambda)=0$ for all $\theta\in\mathbb{R}$ and for $i=\{0,1\}$. Thus, the measure $\mu$ defined in the statement of the lemma is the unique invariant measure for $(U_{t},I_{t})$.

We now check that $p_{0}$ and $p_{1}$ satisfy Equation (\ref{greater than}). Let $\lambda>0$ and observe that for $\theta\in(-\frac{\pi}{2},0)$, writing $1=\sin^{2}(y)\csc^{2}(y)$ in the integrand gives
\begin{align}\label{H less than sin}
\begin{aligned}
H(\theta;\lambda)
 & = \exp{\left(-2\lambda\cot(2\theta)\right)}\int_{\theta}^{0}\exp{\left(2\lambda\cot(2y)\right)}\sec^{2}(y)\sin^{2}(y)\csc^{2}(y)\,dy\\
 & < \exp{\left(-2\lambda\cot(2\theta)\right)}\sin^{2}(\theta)\int_{\theta}^{0}\exp{\left(2\lambda\cot(2y)\right)}\sec^{2}(y)\csc^{2}(y)\,dy\\
 & = \frac1\lambda\sin^{2}(\theta),
 \end{aligned}
\end{align}
since $\sin^{2}(\theta)$ is strictly decreasing on $(-\frac{\pi}{2},0)$ and
\begin{align*}
\frac{d}{dy}\left[\exp{\left(2\lambda\cot(2y)\right)}\right]
=-\lambda\exp{\left(2\lambda\cot(2y)\right)}\sec^{2}(y)\csc^{2}(y).
\end{align*}
Observe also that for $\theta\in(-\frac{\pi}{2},0)$
\begin{align}\label{H prime}
H'(\theta;\lambda) = \lambda H(\theta;\lambda)(\sec^{2}(\theta)+\csc^{2}(\theta))-\sec^{2}(\theta) = \frac{1}{C}(p_{0}(\theta;\lambda)-p_{1}(\theta;\lambda)).
\end{align}
Combining Equations (\ref{H less than sin}) and (\ref{H prime}), we have that for $\theta\in(-\frac{\pi}{2},0)$
\begin{align*}
\frac{1}{C}(p_{0}(\theta;\lambda)-p_{1}(\theta;\lambda))
< 0.
\end{align*}
Thus Equation (\ref{greater than}) holds.
\end{proof}

\begin{lemma}\label{G}
For $\lambda>0$, define
\begin{align*}
G(\lambda)& :=\int_{0}^{2\pi} (p_{0}(\theta;\lambda)-p_{1}(\theta;\lambda))\cos(\theta)\sin(\theta)\,d\theta.
\end{align*}
Then $G(\lambda)>0$ and 
\begin{itemize}
\item\label{fa}
If $G\left(\frac{r}{c}\right) > \frac{\alpha}{c}$, then $||X_{t}||\to\infty$ as $t\to\infty$ almost surely.
\item
If $G\left(\frac{r}{c}\right) < \frac{\alpha}{c}$, then $||X_{t}||\to0$ as $t\to\infty$ almost surely.
\end{itemize}
\end{lemma}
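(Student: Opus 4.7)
The plan is to identify $c(G(r/c) - \alpha/c)$ as the almost sure Lyapunov exponent of the radial process $R_{t}=\|X_{t}\|$ and to read off the dichotomy from its sign. From Equation~(\ref{radius ode}), and using that $R_{t}$ is continuous and strictly positive for all $t$ (the latter holds since $X_{0}\neq 0$ almost surely and each $A_{i}$ is invertible), one has
\[
\log R_{t}-\log R_{0}=\int_{0}^{t}\langle A_{I_{s}}U_{s},U_{s}\rangle\,ds.
\]
A short computation with $u=(\cos\theta,\sin\theta)$ gives $\langle A_{0}u,u\rangle = -\alpha+c\sin\theta\cos\theta$ and $\langle A_{1}u,u\rangle = -\alpha-c\sin\theta\cos\theta$. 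By Lemma~\ref{invariant measure}, $(U_{t},I_{t})$ is a recurrent, irreducible Markov process on $S^{1}\times\{0,1\}$ with unique invariant probability measure $\mu(d\theta,i)=p_{i}(\theta;r/c)\,d\theta$, so Birkhoff's ergodic theorem applied to the bounded function $(u,i)\mapsto\langle A_{i}u,u\rangle$ yields
\[
\frac{1}{t}\log R_{t}\to \int \langle A_{i}u,u\rangle\,d\mu(u,i) = -\alpha+cG(r/c)
\]
almost surely, where I use $\int_{0}^{2\pi}(p_{0}+p_{1})\,d\theta=1$ to simplify the $-\alpha$ contributions. The dichotomy in the lemma is immediate from the sign of $G(r/c)-\alpha/c$.

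For the positivity $G(\lambda)>0$, the shift symmetry $p_{i}(\theta+\pi/2;\lambda)=p_{1-i}(\theta;\lambda)$ in Equation~(\ref{periodic}), combined with $\sin(\theta+\pi/2)\cos(\theta+\pi/2)=-\sin\theta\cos\theta$, shows that the integrand $(p_{0}-p_{1})\sin\theta\cos\theta$ is invariant under $\theta\mapsto\theta+\pi/2$. Hence $G(\lambda)=4\int_{-\pi/2}^{0}(p_{0}-p_{1})\sin\theta\cos\theta\,d\theta$, and on $(-\pi/2,0)$ Equation~(\ref{greater than}) gives $p_{0}<p_{1}$ while $\sin\theta\cos\theta<0$, so the integrand is strictly positive throughout the interior of that interval.

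The main obstacle is justifying the almost sure Birkhoff convergence from an arbitrary initial distribution for $(X_{0},I_{0})$, not just from $\mu$-almost every starting point. This requires positive Harris recurrence of $(U_{t},I_{t})$ on the compact state space $S^{1}\times\{0,1\}$. All the needed ingredients are available---uniqueness of the invariant probability (Lemma~\ref{invariant measure}), strict positivity of the densities $p_{i}$ on the open intervals where they are defined, and the strictly negative drift $\dot{\Theta}_{t}\le -c/2$ which combined with irreducibility of $Q$ forces returns to every open set in finite time---so one can invoke standard PDMP ergodicity results to upgrade Birkhoff to pathwise convergence from every starting state. Once this technicality is handled, every other step is an explicit computation of the ergodic average and a symmetry argument.
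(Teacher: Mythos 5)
Your proposal is correct and follows essentially the same route as the paper: identify $cG(r/c)-\alpha$ as the Lyapunov exponent via the polar decomposition and Birkhoff's ergodic theorem for $(U_t,I_t)$ with invariant measure $\mu$, and deduce $G(\lambda)>0$ from the symmetry (\ref{periodic}) together with the sign comparison (\ref{greater than}) on $(-\tfrac{\pi}{2},0)$. The only divergence is cosmetic: where you appeal to general PDMP ergodicity to cover arbitrary initial angles, the paper gives the same upgrade directly by noting that recurrence makes the hitting time of the full-measure Birkhoff set almost surely finite and then applying the strong Markov property.
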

\begin{proof}
By Equations (\ref{periodic}) and (\ref{greater than}) in the statement of Lemma \ref{invariant measure}, we have that $(p_{0}(\theta;\lambda)-p_{1}(\theta;\lambda))\cos(\theta)\sin(\theta)>0$ for all $\theta$ and thus $G(\lambda)>0$.

Now by Equation (\ref{radius ode}), we have that
\begin{align*}
\frac{1}{t}\log\left(\frac{R_{t}}{R_{0}}\right) = \frac{1}{t}\int_{0}^{t}\langle A_{I_{s}}U_{s},U_{s}\rangle\,ds.
\end{align*}
Identify $\theta\in\mathbb{R}$ with $e_{\theta}:=(\cos\theta,\sin\theta)\in S^{1}$. It follows from Lemma \ref{invariant measure} and Birkhoff's ergodic theorem that there exists a set $A\in S^{1}$ with $\mu(A)=1$ so that if $U_{0}\in A$, then
\begin{align}\label{angle convergence}
\frac{1}{t}\log\left(\frac{R_{t}}{R_{0}}\right) &\to \int\langle A_{i}e_{\theta},e_{\theta}\rangle\,\mu(d\theta,i)
\quad\text{almost surely as }t\to\infty.
\end{align}
Define $T_{A}:=\inf\{t\ge0:U_{t}\in A\}$ and observe that for any $U_{0}\in S^{1}$, we have that $T_{A}<\infty$ almost surely since $U_{t}$ is recurrent. Since $T_{A}$ is a stopping time, we have that the convergence in Equation (\ref{angle convergence}) actually holds for every $U_{0}\in S^{1}$.

Plugging in our choice of $A_{0}$ and $A_{1}$ and the definition of $\mu$ yields
\begin{align*}
\int\langle A_{i}e_{\theta},e_{\theta}\rangle\,\mu(d\theta,i)
& = \int_{0}^{2\pi}\langle A_{0}e_{\theta},e_{\theta}\rangle p_{0}(\theta;r/c)\,d\theta
+ \int_{0}^{2\pi}\langle A_{1}e_{\theta},e_{\theta}\rangle p_{1}(\theta;r/c)\,d\theta\\
& = c\int_{0}^{2\pi}(p_{0}(\theta;r/c)-p_{1}(\theta;r/c))\cos(\theta)\sin(\theta)\,d\theta
- \alpha\\
& = cG\left(\frac{r}{c}\right) - \alpha.
\end{align*}
Hence if $G\left(\frac{r}{c}\right) > \frac{\alpha}{c}$, then $\lim_{t\to\infty}\frac{1}{t}\log\left(\frac{R_{t}}{R_{0}}\right)>0$ almost surely and thus $||X_{t}||\to\infty$ as $t\to\infty$ almost surely. Similarly if $G\left(\frac{r}{c}\right) < \frac{\alpha}{c}$, then $||X_{t}||\to0$ as $t\to\infty$ almost surely.
\end{proof}

Since $G\left(\frac{r}{c}\right)>0$ for every pair of positive numbers $r$ and $c$, it is immediate that we can choose $r$, $c$, and $\alpha$ so that $||X_{t}||\to\infty$ as $t\to\infty$ almost surely.

\begin{rem}
Relating this example to the deterministic problem studied in \cite{balde_note_2009}, the pair $A_{0}$, $A_{1}$ defined above fall in to case \textbf{S4} with $\mathcal{R}>1$ of Theorem 1 in \cite{balde_note_2009}.
\end{rem}

\subsection{Many  transitions between stable and unstable}

The following example shows that there exist two matrices such that as
the switching rate varies from zero to infinity, the asymptotic
behavior of the system will switch between converging to zero and
converging to infinity at least any prespecified number of times.

\begin{example}\label{many transitions}
  Assume $\mathbb{P}(X_{0}=0)=0$ and let $E=\{0,1\}$ and $Q =
  \left( \begin{smallmatrix} -1&1\\ 1&-1 \end{smallmatrix}
  \right)$. We will show that for any positive integer $k$, there
  exist matrices $A_{0},A_{1}\in\mathbb{R}^{2k\times2k}$ and positive
  numbers $a_{1}<b_{1}<a_{2}<b_{2}<\dots<a_{k}<b_{k}$ so that
\begin{enumerate}
\item
$A_{0}$, $A_{1}$ are each Hurwitz.
\item
$\bar{A}=\frac{1}{2}(A_{0}+A_{1})$ is Hurwitz.
\item
If $r\notin\bigcup_{i=1}^{k}(a_{i},b_{i})$, then $||X_{t}||\to0$ almost surely as $t\to\infty$.
\item
For every $i\in\{1,\dots,k\}$, $||X_{t}||\to\infty$ almost surely as $t\to\infty$ for some value of $r\in(a_{i},b_{i})$.
\end{enumerate}
\end{example}

Let $k$ be a given positive integer and define the two block diagonal matrices $A_{0},A_{1}\in\mathbb{R}^{2k\times2k}$ by
\begin{align}\label{blocks}
\begin{aligned}[c]
A_{0} & = \begin{pmatrix} 
A^{1}_{0} & 0 & \cdots & 0 \\ 0 & A^{2}_{0} & \cdots &  0 \\
\vdots & \vdots & \ddots & \vdots \\
0 & 0 & \cdots & A^{k}_{0} 
\end{pmatrix}
\end{aligned}
\qquad
\begin{aligned}[c]
A_{1} & = \begin{pmatrix} 
A^{1}_{1} & 0 & \cdots & 0 \\ 0 & A^{2}_{1} & \cdots &  0 \\
\vdots & \vdots & \ddots & \vdots \\
0 & 0 & \cdots & A^{k}_{1}
\end{pmatrix}
\end{aligned}
\end{align}
where
\begin{align}\label{singles}
\begin{aligned}[c]
A^{i}_{0} & = \begin{pmatrix} -\alpha_{i}&c_{i}\\ 0&-\alpha_{i} \end{pmatrix}
\end{aligned}
\qquad
\begin{aligned}[c]
A^{i}_{1} & = \begin{pmatrix} -\alpha_{i}&0\\ -c_{i}&-\alpha_{i}\end{pmatrix}
\end{aligned}
\end{align}
for some positive numbers $\{c_{i}\}_{i=1}^{k}$ and
$\{\alpha_{i}\}_{i=1}^{k}$. It's immediate that $A_{0}$, $A_{1}$, and
$\bar{A}$ are all Hurwitz.

Let $X_{t}$ denote the $\mathbb{R}^{2k}$-valued process corresponding
to (\ref{blocks}) and $X^{(i)}_{t}$ the $\mathbb{R}^{2}$-valued process
corresponding to (\ref{singles}) for each $i\in\{1,\dots,k\}$. Since
the ODEs for $X^{(i)}$ and $X^{(j)}$ are not coupled for $i\ne j$, we have
that $X_t=(X_t^{1)}, \dots,X_t^{(k)})$ when viewed as an
$(\mathbb{R}^{2})^k$-valued process. In particular, one has
\begin{align*}
||X_{t}||^{2}=\sum_{i=1}^{k}||X^{(i)}_{t}||^{2}.
\end{align*}
Thus $||X_{t}||\to0$ if and only if $||X^{(i)}_{t}||\to0$ for every
$i\in\{1,\dots,k\}$. Furthermore if $||X^{(i)}_{t}||\to\infty$ for some
$i\in\{1,\dots,k\}$, then $||X_{t}||\to\infty$.

The proof proceeds by choosing the parameters $\alpha_i$ and $c_i$ as
in Example~\ref{one transition} so that $X^{(i)}$ is unstable for
switching rates $r$ in an interval $(a_i,b_i)$ but stable out side of
the interval. By arranging so that the collection of intervals
$\{(a_j,b_j) : j =1,\dots,k\} $ are disjoint we will  succeed at
constructing the desired matrices $A_0$ and $A_1$.

More explicitly, it follows from Lemma~\ref{G} and Theorems~\ref{slow
  theorem} and \ref{fast theorem} that we can choose $r_{1}$, $c_{1}$,
$\alpha_{1}$, and $a_{1}<b_{1}$ so that
\begin{align*}
\begin{aligned}[c]
G\left(\frac{r_{1}}{c_{1}}\right)>\frac{\alpha_{1}}{c_{1}}
\end{aligned}
\quad\text{and}\quad
\begin{aligned}[c]
  G\left(\frac{r}{c_{1}}\right)<\frac{\alpha_{1}}{c_{1}}\text{ if
  }r\notin(a_{1},b_{1}).
\end{aligned}\end{align*}
Choose $N>\frac{b_{1}}{a_{1}}$ and for $i\in\{2,\dots,k\}$ define
\begin{align*}
\begin{aligned}[c]
a_{i}=\frac{a_{1}}{N^{i-1}},
\end{aligned}
\quad
\begin{aligned}[c]
b_{i}=\frac{b_{1}}{N^{i-1}},
\end{aligned}
\quad
\begin{aligned}[c]
\alpha_{i}=\frac{\alpha_{1}}{N^{i-1}},
\end{aligned}
\quad
\begin{aligned}[c]
c_{i}=\frac{c_{1}}{N^{i-1}},
\end{aligned}
\quad
\begin{aligned}[c]
r_{i}=\frac{r_{1}}{N^{i-1}}.
\end{aligned}
\end{align*}
To see that our intervals $(a_{i},b_{i})$ don't overlap, observe that
$a_{i}<b_{i}$ for each $i$ and
\begin{align*}
  b_{i}=\frac{b_{1}}{N^{i-1}} < \frac{Na_{1}}{N^{i-1}} = a_{i-1}.
\end{align*}
Next observe that if $r\notin(a_{i},b_{i})$, then
$rN^{i-1}\notin(a_{1},b_{1})$ and therefore
\begin{align*}
G\left(\frac{r}{c_{i}}\right) = G\left(\frac{rN^{i-1}}{c_{1}}\right)<\frac{\alpha_{1}}{c_{1}}.
\end{align*}
Thus, $||X^{(i)}_{t}||\to0$ almost surely as $t\to\infty$ if $r\notin(a_{i},b_{i})$.

Finally observe that $r_{i}\in(a_{i},b_{i})$ and
\begin{align*}
  G\left(\frac{r_{i}}{c_{i}}\right) =
  G\left(\frac{r_{1}}{c_{1}}\right)>\frac{\alpha_{1}}{c_{1}} =
  \frac{\alpha_{i}}{c_{i}}.
\end{align*}
Thus, $||X^{(i)}_{t}||\to\infty$ almost surely as $t\to\infty$ if the
switching rate is $r_{i}\in(a_{i},b_{i})$.

\section{Conclusions}
Stochastically switched linear ODEs are one of the simplest examples
of stochastically switched systems. However despite their simplicity,
we have shown that their behavior can be quite rich. First, the large
time behavior can depend on the switching rate in a very delicate
way. Second, this large time behavior can be very different from the
large time behavior of both the individual systems and the average
system.

\medskip

{\bf Acknowledgement.}  JCM would like to thank Yuri Bakhtin for
stimulating discussions. This research was partially supported by NSF
grants EF-1038593 (HFN, MCR), DMS-0854879 (JCM), DMS-0943760 (MCR),
and NIH grant R01 ES019876 (DT).  \medskip

\bibliography{switchingbib}
\bibliographystyle{siam}


          %


          %



\end{document}